\documentclass[12pt,a4paper,reqno,twoside]{amsart}
\usepackage{amsmath,amssymb,amsfonts,amscd,amsthm}

\usepackage{xcolor}

\setlength{\textwidth}{170mm}
\setlength{\textheight}{235mm}

\setlength{\oddsidemargin}{-5mm}
\setlength{\evensidemargin}{-5mm}

\setlength{\topmargin}{-5mm}
\setlength{\footskip}{40pt}

\newtheorem{theorem}{Theorem}[section]
\newtheorem{corollary}{Corollary}[section]
\newtheorem{lemma}{Lemma}[section]

\theoremstyle{definition}
\newtheorem{definition}{Definition}[section]

\theoremstyle{remark}
\newtheorem{remark}{Remark}[section]

\newtheorem{example}{Example}[section]

\numberwithin{equation}{section}
\begin{document}

\title[Bi-univalent Functions of Complex Order]{Estimate for Initial MacLaurin Coefficients\\
of Certain Subclasses of Bi-univalent Functions of\\
Complex Order Associated with the Hohlov Operator}

\author[T. Bulboac\u{a}]{Teodor Bulboac\u{a}}

\address{Corresponding Author,Faculty of Mathematics and Computer Science, Babe\c{s}-Bolyai University, 400084 Cluj-Na\-po\-ca, Romania}

\email{bulboaca@math.ubbcluj.ro}
\author[G. Murugusundaramoorthy]{G. Murugusundaramoorthy}

\address{Department of Mathematics, School of Advanced Sciences, VIT University, Vellore 632014, Tamilnadu, India}

\email{gmsmoorthy@yahoo.com}
\begin{abstract}
In this paper we introduce and investigate two new subclasses of the function class $\Sigma$ of bi-univalent functions of complex order defined in the open unit disk, which are associated with the Hohlov operator, and satisfying subordinate conditions. Furthermore, we find estimates on the Taylor-MacLaurin coefficients $|a_2|$ and $|a_3|$ for functions in these new subclasses. Several known or new consequences of these results are also pointed out.
\end{abstract}

\subjclass[2010]{Primary 30C45}

\keywords{Analytic functions, Univalent functions, Bi-univalent functions, Bi-starlike and bi-convex functions, Hohlov operator, Gaussian hypergeometric function, Dziok-Srivastava operator}

\maketitle

\section{Introduction}

Let $\mathcal{A}$ denote the class of functions of the form
\begin{equation}\label{Int-e1}
f(z)=z+\sum\limits_{n=2}^{\infty}a_nz^n,
\end{equation}
which are analytic in the open unit disk $\mathbb{U}:=\{z\in\mathbb{C}:|z|<1\}$.

By $\mathcal{S}$ we will denote the subclass of all functions in $\mathcal{A}$ which are univalent in $\mathbb{U}$. Some of the important and well-investigated subclasses of the class $\mathcal{S}$ include, for example, the class $\mathcal{S}^*(\alpha)$ of {\em starlike functions of order} $\alpha$ in $\mathbb{U}$, and the class $\mathcal{K}(\alpha)$ of {\em convex functions of order} $\alpha$ in $\mathbb{U}$, with $0\leq\alpha<1$.

It is well known that every function $f\in\mathcal{S}$ has an inverse $f^{-1}$, defined by
\[
f^{-1}(f(z))=z\quad(z\in\mathbb{U})
\]
and
\[
f(f^{-1}(w))=w\quad\left(|w|<r_0(f),\;r_0(f)\geq\frac{1}{4}\right),
\]
where
\begin{equation}\label{g-e}
g(w)=f^{-1}(w)=w-a_2w^2+\left(2a_2^2-a_3\right)w^3-\left(5a_2^3-5a_2a_3+a_4\right)w^4+\dots.
\end{equation}

A function $f\in\mathcal{A}$ is said to be {\em bi-univalent} in $\mathbb{U}$ if $f(z)$ and $f^{-1}(w)$ are univalent in $\mathbb{U}$, and let $\Sigma$ denote the class of {\em bi-univalent functions} in $\mathbb{U}$.

The {\em convolution} or {\em Hadamard product} of two functions $f,h\in\mathcal{A}$ is denoted by $f\ast h$, and is defined by
\[
(f\ast h)(z):=z+\sum\limits_{n=2}^{\infty }a_{n}b_{n}z^{n},
\]
where $f$ is given by \eqref{Int-e1} and $h(z)=z+\sum\limits_{n=2}^{\infty }b_{n}z^{n}$. Next, in our present investigation, we need to recall the {\em convolution operator} $\mathcal{I}_{a,b,c}$ due to Hohlov \cite{HO1,HO2}, which is a special case of the {\em Dziok-Srivastava operator} \cite{dziok2,dziok1}.

For the complex parameters $a$, $b$ and $c$ $(c\neq0,-1,-2,-3,\dots)$, the {\em Gaussian hypergeometric function} $\,_{2}F_{1}(a,b,c;z)$ is defined as
\begin{equation}\label{Gauss-equ}
\,_{2}F_{1}(a,b,c;z):=\sum_{n=0}^{\infty}\frac{(a)_{n}(b)_{n}}{(c)_{n}}\;\frac{z^{n}}{n!}
=1+\sum_{n=2}^{\infty}\frac{(a)_{n-1}(b)_{n-1}}{(c)_{n-1}}\;\frac{z^{n-1}}{(n-1)!}\quad(z\in\mathbb{U}),
\end{equation}
where $(\alpha)_{n}$ is the {\em Pochhammer symbol} (or the {\em shifted factorial}) given by
\[
(\alpha)_{n}:=\frac{\Gamma(\alpha+n)}{\Gamma(\alpha)}=\left\{
\begin{array}{lll}
1,&\text{if}&n=0,\\
\alpha(\alpha+1)(\alpha+2)\cdots(\alpha+n-1),&\text{if}&n=1,2,3,\dots.
\end{array}
\right.
\]

For the real positive values $a$, $b$ and $c$, using the Gaussian hypergeometric function \eqref{Gauss-equ}, Hohlov \cite{HO1,HO2} introduced the familiar convolution operator $\mathcal{I}_{a,b,c}:\mathcal{A}\rightarrow\mathcal{A}$ by
\begin{equation}\label{Hohlov}
\mathcal{I}_{a,b,c}f(z)=\left[z\,_{2}F_{1}(a,b,c;z)\right]\ast f(z)=z+\sum_{n=2}^{\infty}\varphi_{n}a_{n}z^{n}
\quad(z\in\mathbb{U}),
\end{equation}
where
\begin{equation}\label{varphi-n}
\varphi_{n}=\frac{(a)_{n-1}(b)_{n-1}}{(c)_{n-1}(n-1)!},
\end{equation}
and the function $f$ is of the form \eqref{Int-e1}.

Hohlov \cite{HO1,HO2} discussed some interesting geometrical properties exhibited by the operator $\mathcal{I}_{a,b,c}$, and the three-parameter family of operators $\mathcal{I}_{a,b,c}$ contains, as its special cases, most of the known linear integral or differential operators. In particular, if
$b=1$ in \eqref{Hohlov}, then $\mathcal{I}_{a,b,c}$ reduces to the {\em Carlson-Shaffer operator}. Similarly, it is easily seen that the {\em Hohlov operator} $\mathcal{I}_{a,b,c}$ is also a generalization of the {\em Ruscheweyh derivative operator} as well as the {\em Bernardi-Libera-Livingston operator}. It is of interest to note that for $a=c$ and $b=1$, then $\mathcal{I}_{a,1,a}f=f$, for all $f\in\mathcal{A}$.

Recently there has been triggering interest to study bi-univalent function class $\Sigma$ and obtained non-sharp coefficient estimates on the first two coefficients $|a_2|$ and $|a_3|$ of \eqref{Int-e1}. But the coefficient problem for each of the following {\em Taylor-MacLaurin coefficients}
$$|a_n|\quad\left(n\geq3\right)$$
is still an open problem (see \cite{Branna1970,Bran-1979,Bran1985,Lewin,Netany,Taha1981}). Many researchers (see \cite{BAF-MKA,haya,Li-Wang,HMS-AKM-PG}) have recently introduced and investigated several interesting subclasses of the bi-univalent function class $\Sigma$ and they have found non-sharp estimates on the
first two Taylor-MacLaurin coefficients $|a_2|$ and $|a_3|$.

\section{Definitions and Preliminaries}

In \cite{pad} the authors defined the classes of functions $\mathcal{P}_{m}(\beta)$ as follows:

\begin{definition}\cite{pad}
Let $\mathcal{P}_{m}(\beta)$, with $m\geq2$ and $0\leq\beta<1$, denote the class of univalent analytic functions $P$, normalized with $P(0)=1$, and satisfying
\[
\int_0^{2\pi}\left|\frac{\operatorname{Re}P(z)-\beta}{1-\beta}\right|\operatorname{d}\theta\leq m\pi,
\]
where $z=re^{i\theta}\in\mathbb{U}$.
\end{definition}

For $\beta=0$, we denote $\mathcal{P}_m:=\mathcal{P}_m(0)$, hence the class $\mathcal{P}_m$ represents the class of functions $p$ analytic in $\mathbb{U}$, normalized with $p(0)=1$, and having the representation
\[
p(z)=\int\limits_0^{2\pi}\frac{1- ze^{it}}{1+ze^{it}}\operatorname{d}\mu(t),
\]
where $\mu$ is a real-valued function with bounded variation, which satisfies
\[
\int_0^{2\pi}d\mu(t)=2\pi\quad\text{and}\quad\int_0^{2\pi}\left|d\mu(t)\right|\leq m,\;m\geq2.
\]
Remark that $\mathcal{P}:=\mathcal{P}_2$ is the well-known class of {\em Carath\'eodory functions}, i.e. the normalized functions with positive real part in the open unit disk $\mathbb{U}$.

Motivated by the earlier work of Deniz \cite{DEN}, Peng et al. \cite{Peng} (see also \cite{GMS-TP,HMS-GMS-NM}) and Goswami et al. \cite{gosami}, in the present paper we introduce new subclasses of the function class $\Sigma$ of complex order $\gamma\in\mathbb{C}^*:=\mathbb{C}\setminus\{0\}$, involving Hohlov operator $\mathcal{I}_{a,b,c}$, and we find estimates on the coefficients $|a_2|$ and $|a_3|$ for the functions that belong to these new subclasses of functions of the class $\Sigma$. Several related classes are also considered, and connection to earlier known results are made.

\begin{definition}\label{Defi-1}
For $ 0\leq\lambda\leq1$ and $0\leq\beta<1$, a function $f\in\Sigma$ is said to be in the
class $\mathcal{S}^{a,b,c}_{\Sigma}(\gamma,\lambda,\beta)$ if the following two conditions are satisfied:
\begin{equation}\label{Defi-1-e1}
1+\frac{1}{\gamma}\left[\frac{z\left(\mathcal{I}_{a,b,c}f(z)\right)'}{(1-\lambda)z+
\lambda\mathcal{I}_{a,b,c}f(z)}-1\right]\in\mathcal{P}_m(\beta)
\end{equation}
and
\begin{equation}\label{Defi-1-e2}
1+\frac{1}{\gamma}\left[\frac{w\left(\mathcal{I}_{a,b,c}g(w)\right)'}{(1-\lambda)w+
\lambda\mathcal{I}_{a,b,c}g(w)}-1\right]\in\mathcal{P}_m(\beta),
\end{equation}
where $\gamma\in\mathbb{C}^*$, the function $g$ is given by \eqref{g-e}, and $z,w\in\mathbb{U}$.
\end{definition}

\begin{definition}\label{defin2}
For $0\leq\lambda\leq1$ and $0\leq\beta<1$, a function $f\in\Sigma$ is said to be in the class $\mathcal{K}^{a,b,c}_{\Sigma}(\gamma,\lambda,\beta)$ if it satisfies the following two conditions:
\begin{equation}\label{def2}
1+\frac{1}{\gamma}\left[\frac{z\left(\mathcal{I}_{a,b,c}f(z)\right)'+z^2\left(\mathcal{I}_{a,b,c}f(z)\right)''}
{(1-\lambda)z+\lambda z\left(\mathcal{I}_{a,b,c}f(z)\right)'}-1\right]\in\mathcal{P}_m(\beta)
\end{equation}
and
\begin{equation}\label{def21}
1+\frac{1}{\gamma}\left[\frac{w\left(\mathcal{I}_{a,b,c}g(w)\right)'+w^2\left(\mathcal{I}_{a,b,c}g(w)\right)''}
{(1-\lambda)w+\lambda w\left(\mathcal{I}_{a,b,c}g(w)\right)'}-1\right]\in\mathcal{P}_m(\beta),
\end{equation}
where $\gamma\in\mathbb{C}^*$, the function $g$ is given by \eqref{g-e}, and $z,w\in\mathbb{U}$.
\end{definition}

On specializing the parameters $\lambda$ one can state the various new subclasses of $\Sigma$ as illustrated in the following examples. Thus, taking $\lambda=1$ in the above two definitions, we obtain:

\begin{example}\label{ex1}
Suppose that $0\leq\beta<1$ and $\gamma\in\mathbb{C}^*$.

(i) A function $f\in\Sigma$ is said to be in the class $\mathcal{S}^{a,b,c}_{\Sigma}(\gamma,\beta)$ if the following conditions are satisfied:
\[
1+\frac{1}{\gamma}\left[\frac{z\left(\mathcal{I}_{a,b,c}f(z)\right)'}{\mathcal{I}_{a,b,c}f(z)}-1\right]
\in\mathcal{P}_m(\beta),\quad
1+\frac{1}{\gamma}\left[\frac{w\left(\mathcal{I}_{a,b,c}g(w)\right)'}{\mathcal{I}_{a,b,c}g(w)}-1\right]
\in\mathcal{P}_m(\beta),
\]
where $g=f^{-1}$ and $z,w\in\mathbb{U}$.

(ii) A function $f\in\Sigma$ is said to be in the class $\mathcal{K}^{a,b,c}_{\Sigma}(\gamma,\beta)$ if it satisfies the following conditions:
\[
1+\frac{1}{\gamma}\frac{z\left(\mathcal{I}_{a,b,c}f(z)\right)''}
{\left(\mathcal{I}_{a,b,c}f(z)\right)'}\in\mathcal{P}_m(\beta),\quad
1+\frac{1}{\gamma}\frac{w\left(\mathcal{I}_{a,b,c}g(w)\right)''}{\left(\mathcal{I}_{a,b,c}g(w)\right)'}
\in\mathcal{P}_m(\beta),
\]
where $g=f^{-1}$ and $z,w\in\mathbb{U}$.
\end{example}

Taking $\lambda=0$ in the previous two definitions, we obtain the next special cases:

\begin{example}\label{ex3}
Suppose that $0\leq\beta<1$ and $\gamma\in\mathbb{C}^*$.

(i) A function $f\in\Sigma$ is said to be in the class $\mathcal{H}^{a,b,c}_{\Sigma}(\gamma,\beta)$ if the following conditions are satisfied:
\[
1+\frac{1}{\gamma}\left[\left(\mathcal{I}_{a,b,c}f(z)\right)'-1\right]\in\mathcal{P}_m(\beta),\quad
1+\frac{1}{\gamma}\left[\left(\mathcal{I}_{a,b,c}g(w)\right)'-1\right]\in\mathcal{P}_m(\beta),
\]
where $g=f^{-1}$ and $z,w\in\mathbb{U}$.

(ii) A function $f\in\Sigma$ is said to be in the class $\mathcal{Q}^{a,b,c}_{\Sigma}(\gamma,\beta)$ if it satisfies the following conditions:
\begin{eqnarray*}
&&1+\frac{1}{\gamma}\left[\left(\mathcal{I}_{a,b,c}f(z)\right)'+z\left(\mathcal{I}_{a,b,c}f(z)\right)''-1\right]
\in\mathcal{P}_m(\beta),\\
&&1+\frac{1}{\gamma}\left[\left(\mathcal{I}_{a,b,c}g(w)\right)'+w\left(\mathcal{I}_{a,b,c}g(w)\right)''-1\right]
\in\mathcal{P}_m(\beta),
\end{eqnarray*}
where $g=f^{-1}$ and $z,w\in\mathbb{U}$.
\end{example}

In particular, for $a=c$ and $b=1$, we note that $\mathcal{I}_{a,1,a}f=f$ for all $f\in\mathcal{A}$, and thus, for $\lambda=1$ and $\lambda=0$ the classes $\mathcal{S}^{a,b,c}_{\Sigma}(\gamma,\lambda,\beta)$ and $\mathcal{K}^{a,b,c}_{\Sigma}(\gamma,\lambda,\beta)$ reduces to the following subclasses of $\Sigma$, respectively:

\begin{example}\label{ex5}
(i) For $0\leq\beta<1$ and $\gamma\in\mathbb{C}^*$, a function $f\in\Sigma$ is said to be in the
class $\mathcal{S}^*_{\Sigma}(\gamma,\beta)$ if the following conditions are satisfied:
\[
1+\frac{1}{\gamma}\left(\frac{z f'(z)}{f(z)}-1\right)\in\mathcal{P}_m(\beta)\quad\text{and}\quad
1+\frac{1}{\gamma}\left(\frac{w g'(w)}{g(w)}-1\right)\in\mathcal{P}_m(\beta),
\]
where $g=f^{-1}$ and $z,w\in\mathbb{U}$.

(ii) For $0\leq\beta<1$ and $\gamma\in\mathbb{C}^*$, a function $f\in\Sigma$ is said to be in the
class $\mathcal{K}_{\Sigma}(\gamma,\beta)$ if the following conditions are satisfied:
\[
1+\frac{1}{\gamma}\frac{zf''(z)}{f'(z)}\in\mathcal{P}_m(\beta)\quad\text{and}\quad
1+\frac{1}{\gamma}\frac{w g''(w)}{g'(w)}\in\mathcal{P}_m(\beta),
\]
where $g=f^{-1}$ and $z,w\in\mathbb{U}$.
\end{example}

\begin{example}\label{exam6}

(i) For $0\leq\beta<1$ and $\gamma\in\mathbb{C}^*$, a function $f\in\Sigma$ is said to be in the class $\mathcal{H}_{\Sigma}(\gamma,\beta)$ if the following conditions are satisfied:
\[
1+\frac{1}{\gamma}\left(f'(z)-1\right)\in\mathcal{P}_m(\beta)\quad\text{and}\quad
1+\frac{1}{\gamma}\left(g'(w)-1\right)\in\mathcal{P}_m(\beta),
\]
where $g=f^{-1}$ and $z,w\in\mathbb{U}$.

(ii) For $0\leq\beta<1$ and $\gamma\in\mathbb{C}^*$, a function $f\in\Sigma$ is said to be in the class $\mathcal{Q}_{\Sigma}(\gamma,\beta)$ if the following conditions are satisfied:
\[
1+\frac{1}{\gamma}\left(f'(z)+zf''(z)-1\right)\in\mathcal{P}_m(\beta)\quad\text{and}\quad
1+\frac{1}{\gamma}\left(g'(w)+wg''(w)-1\right)\in\mathcal{P}_m(\beta),
\]
where $g=f^{-1}$ and $z,w\in\mathbb{U}$.
\end{example}

In order to derive our main results, we shall need the following lemma:

\begin{lemma}\cite[Lemma 2.1]{gosami}\label{lem2.1}
Let the function $\Phi(z)=1+\sum\limits_{n=1}^\infty{h_n}{z^n}$, $z\in\mathbb{U}$, such that $\Phi\in\mathcal{P}_m(\beta)$. Then,
\[
\left|h_n\right|\leq m(1-\beta),\;n\geq1.
\]
\end{lemma}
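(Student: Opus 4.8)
The plan is to extract the Taylor coefficient $h_n$ directly from the real part of $\Phi$ and then apply the defining integral inequality of $\mathcal{P}_m(\beta)$. Fix $0<r<1$ and write $z=re^{i\theta}$. Since $\Phi(z)=1+\sum_{k\geq1}h_kz^k$ converges uniformly on the circle $|z|=r$, its real part admits the Fourier expansion
\[
\operatorname{Re}\Phi(re^{i\theta})=1+\frac12\sum_{k\geq1}r^k\left(h_ke^{ik\theta}+\overline{h_k}e^{-ik\theta}\right),
\]
and, multiplying by $e^{-in\theta}$ and integrating term-by-term (legitimate by uniform convergence), all modes cancel except one, which yields the recovery formula
\[
h_n=\frac{1}{\pi r^n}\int_0^{2\pi}\operatorname{Re}\Phi(re^{i\theta})\,e^{-in\theta}\,\mathrm{d}\theta,\qquad n\geq1.
\]

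Next I would exploit that $\int_0^{2\pi}e^{-in\theta}\,\mathrm{d}\theta=0$ for every $n\geq1$, so the constant $\beta$ can be inserted free of charge:
\[
h_n=\frac{1}{\pi r^n}\int_0^{2\pi}\left[\operatorname{Re}\Phi(re^{i\theta})-\beta\right]e^{-in\theta}\,\mathrm{d}\theta.
\]
Taking moduli and using $|e^{-in\theta}|=1$ then gives
\[
|h_n|\leq\frac{1}{\pi r^n}\int_0^{2\pi}\left|\operatorname{Re}\Phi(re^{i\theta})-\beta\right|\mathrm{d}\theta=\frac{1-\beta}{\pi r^n}\int_0^{2\pi}\left|\frac{\operatorname{Re}\Phi(re^{i\theta})-\beta}{1-\beta}\right|\mathrm{d}\theta.
\]
The membership $\Phi\in\mathcal{P}_m(\beta)$ bounds the last integral by $m\pi$, whence $|h_n|\leq m(1-\beta)/r^n$, and letting $r\to1^-$ delivers the desired estimate $|h_n|\leq m(1-\beta)$.

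The only genuinely delicate point is the coefficient-recovery identity, i.e. verifying that integrating $\operatorname{Re}\Phi$ against $e^{-in\theta}$ returns $\pi h_nr^n$ rather than a mixture involving $\overline{h_n}$; this is precisely where one uses that $\Phi$ is analytic (not merely harmonic) and normalized by $\Phi(0)=1$, so that the negative-frequency modes of $\operatorname{Re}\Phi$ are conjugates of the positive ones and the cross terms integrate to zero. An alternative route, closer to the material recalled before the lemma, is to first reduce to $\beta=0$ by writing $\Phi=(1-\beta)p+\beta$ with $p\in\mathcal{P}_m$, then substitute the Herglotz-type representation of $p$, expand the kernel as $\frac{1-ze^{it}}{1+ze^{it}}=1+2\sum_{k\geq1}(-1)^ke^{ikt}z^k$, and bound the resulting coefficient integral by the total-variation constraint $\int_0^{2\pi}|\mathrm{d}\mu(t)|\leq m$. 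That second approach is equally valid but forces one to track all normalizing constants so that the bound lands on exactly $m(1-\beta)$, which is its main bookkeeping hazard; for this reason I would favour the direct argument above, which uses the defining inequality verbatim and needs no normalization bookkeeping.
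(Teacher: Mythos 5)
Your argument is correct and complete; note, though, that the paper itself supplies no proof of this lemma---it is imported verbatim, with citation, as Lemma 2.1 of \cite{gosami}---so there is no internal proof to compare against, and your Fourier-coefficient argument is the standard one used for coefficient bounds in $\mathcal{P}_m(\beta)$. The one delicate step, the recovery formula $h_n=\frac{1}{\pi r^n}\int_0^{2\pi}\operatorname{Re}\Phi(re^{i\theta})e^{-in\theta}\,\mathrm{d}\theta$, you justify correctly: for $n\geq1$ the conjugate modes $\overline{h_k}e^{-ik\theta}$ contribute $\int_0^{2\pi}e^{-i(k+n)\theta}\,\mathrm{d}\theta=0$ since $k+n\geq2$, which is exactly where analyticity plus the normalization enter. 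You also correctly use $0\leq\beta<1$ when extracting the positive factor $1-\beta$, and the fact that the defining inequality of $\mathcal{P}_m(\beta)$ holds on each circle $|z|=r<1$, so the bound $|h_n|\leq m(1-\beta)/r^n$ may be passed to the limit $r\to1^-$. Your instinct to avoid the Herglotz-type route is sound for a further reason beyond bookkeeping: as reproduced in this paper, the representation for $\mathcal{P}_m$ carries inconsistent normalizations (the condition $\int_0^{2\pi}\mathrm{d}\mu(t)=2\pi$ forces total variation at least $2\pi$, contradicting $\int_0^{2\pi}|\mathrm{d}\mu(t)|\leq m$ whenever $m<2\pi$; the latter bound should read $m\pi$), so tracking constants through that representation as stated would indeed be hazardous, whereas your direct argument uses the defining integral inequality verbatim.
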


By employing the techniques used earlier by Deniz \cite{DEN}, in the following section we find estimates of the coefficients $|a_2|$ and $|a_3|$ for functions of the above-defined subclasses $\mathcal{S}^{a,b,c}_{\Sigma}(\gamma,\lambda,\beta)$ and $\mathcal{K}^{a,b,c}_{\Sigma}(\gamma,\lambda,\beta)$ of the function class $\Sigma$.

\section{Coefficient Bounds for the Function Class $\mathcal{S}^{a,b,c}_{\Sigma}(\gamma,\lambda,\beta)$}

We begin by finding the estimates on the coefficients $|a_2|$ and $|a_3|$ for functions belonging to the class
$\mathcal{S}^{a,b,c}_{\Sigma}(\gamma,\lambda,\beta)$.

Supposing that the functions $p,q\in\mathcal{P}_m(\beta)$, with
\begin{eqnarray}
&&p(z)=1+\sum_{k=1}^{\infty}p_kz^k\quad(z\in\mathbb{U}),\label{phi-uz}\\
&&q(z)=1+\sum_{k=1}^{\infty}q_kz^k\quad(z\in\mathbb{U}),\label{phi-vw}
\end{eqnarray}
from Lemma \ref{lem2.1} it follows that
\begin{eqnarray}
&&|p_k|\leq m(1-\beta),\label{mod-p-q}\\
&&|q_k|\leq m(1-\beta),\quad\text{for all}\quad k\geq1.\label{c7e2.14}
\end{eqnarray}

\begin{theorem}\label{Bi-th1}
If the function $f$ given by \eqref{Int-e1} belongs to the class $\mathcal{S}^{a,b,c}_{\Sigma}(\gamma,\lambda,\beta)$, then
\begin{equation}\label{bi-th1-b-a2}
|a_2|\leq\min\left\{\sqrt{\frac{m|\gamma|(1-\beta)}
{\left|(\lambda^2-2\lambda)\varphi_2^2+(3-\lambda)\varphi_3\right|}};
\frac{m|\gamma|(1-\beta)}{(2-\lambda)\varphi_2}\right\}
\end{equation}
and
\begin{eqnarray}\label{bi-th1-b-a3}
|a_3|\leq\min\left\{\dfrac{m|\gamma|(1-\beta)}{(3-\lambda)\varphi_3}+\dfrac{m|\gamma|(1-\beta)}
{\left|(\lambda^2-2\lambda)\varphi_2^2+(3-\lambda)\varphi_3\right|};\right.\nonumber\\
\dfrac{m|\gamma|(1-\beta)}{(3-\lambda)\varphi_3}
\left(1+\dfrac{m|\gamma|(2\lambda-\lambda^2)(1-\beta)}{(2-\lambda)^2\varphi_2^2}\right);\nonumber\\
\left.\dfrac{m|\gamma|(1-\beta)}{(3-\lambda)\varphi_3}\left(1+m|\gamma|(1-\beta)
\dfrac{\left|(\lambda^2-2\lambda)\varphi_2^2+2(3-\lambda)\varphi_3\right|}
{(2-\lambda)^2\varphi_2^2}\right)\right\},
\end{eqnarray}
where $\varphi_2$ and $\varphi_3$ are given by \eqref{varphi-n}.
\end{theorem}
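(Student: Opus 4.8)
The plan is to convert the two membership conditions \eqref{Defi-1-e1} and \eqref{Defi-1-e2} into power-series identities and then compare coefficients. Writing the left-hand sides as $p$ and $q$, where $p,q\in\mathcal{P}_m(\beta)$ admit the expansions \eqref{phi-uz} and \eqref{phi-vw}, I would first record the Taylor expansion of the Hohlov transform, $\mathcal{I}_{a,b,c}f(z)=z+\varphi_2a_2z^2+\varphi_3a_3z^3+\cdots$ (and the analogous expansion for $\mathcal{I}_{a,b,c}g$, with $g$ given by \eqref{g-e}), where $\varphi_2,\varphi_3$ are as in \eqref{varphi-n}. Then I would form the numerator $z\left(\mathcal{I}_{a,b,c}f(z)\right)'$ and the denominator $(1-\lambda)z+\lambda\mathcal{I}_{a,b,c}f(z)$, cancel the common factor $z$, and expand the quotient via $\frac{1}{1+x}=1-x+x^2-\cdots$ up to the second order. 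Matching the resulting series with $p$ gives a linear relation in $a_2$ from the coefficient of $z$ and a relation involving $a_3$ and $a_2^2$ from the coefficient of $z^2$; performing the same computation with $g$ and $q$ (using that the relevant coefficients of $g$ are $-a_2$ and $2a_2^2-a_3$) yields two further relations. In particular the two first-order relations force $p_1=-q_1$.

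For the bound on $|a_2|$ I would proceed in two independent ways. Adding the two second-order relations makes the $a_3$-terms cancel and leaves an expression of the form $2\left[(\lambda^2-2\lambda)\varphi_2^2+(3-\lambda)\varphi_3\right]a_2^2=\gamma(p_2+q_2)$; solving for $a_2^2$ and estimating $|p_2|,|q_2|\le m(1-\beta)$ by \eqref{mod-p-q} and \eqref{c7e2.14} gives the square-root bound in \eqref{bi-th1-b-a2}. Alternatively, the first-order relation $(2-\lambda)\varphi_2a_2=\gamma p_1$ together with $|p_1|\le m(1-\beta)$ gives the second, linear bound. Since both are valid, the minimum of the two is the asserted estimate.

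For $|a_3|$ the strategy is to produce three admissible upper bounds and again take the minimum. Subtracting the two second-order relations isolates $a_3-a_2^2=\dfrac{\gamma(p_2-q_2)}{2(3-\lambda)\varphi_3}$; combined with the first ($a_2^2$-)bound above this yields the first entry of \eqref{bi-th1-b-a3}. Next, solving the $f$-relation for $a_3$ directly, $a_3=\dfrac{\gamma p_2-(\lambda^2-2\lambda)\varphi_2^2a_2^2}{(3-\lambda)\varphi_3}$, and inserting the value $a_2^2=\gamma^2p_1^2/\left[(2-\lambda)^2\varphi_2^2\right]$ coming from the linear relation gives the second entry. Finally, eliminating $p_2$ between this last expression and the subtraction identity produces $a_3=\dfrac{(\lambda^2-2\lambda)\varphi_2^2+2(3-\lambda)\varphi_3}{(3-\lambda)\varphi_3}\,a_2^2-\dfrac{\gamma q_2}{(3-\lambda)\varphi_3}$, and bounding $a_2^2$ again through the linear relation yields the third entry. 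Each step uses only the triangle inequality together with Lemma \ref{lem2.1}.

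I expect the bookkeeping in the series expansion — in particular propagating the inverse-function coefficients $-a_2$ and $2a_2^2-a_3$ from \eqref{g-e} through the quotient for $g$, and keeping the signs of the $(\lambda^2-2\lambda)$-terms straight while inverting the denominator — to be the main obstacle; once the four coefficient relations are established correctly, the three estimates for $|a_3|$ follow by elementary algebraic manipulation and the minimum is taken.
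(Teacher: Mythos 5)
Your proposal is correct and follows essentially the same route as the paper's proof: the identical four coefficient relations, the same addition/subtraction tricks for the two bounds on $|a_2|$, and the same three combinations for $|a_3|$ (the subtraction identity together with the $|a_2|^2$-bound; the $f$-relation with $a_2^2=\gamma^2p_1^2/\left[(2-\lambda)^2\varphi_2^2\right]$; and the $q$-relation, which your elimination of $p_2$ reproduces exactly), each estimated via Lemma \ref{lem2.1}. The only difference is cosmetic — the paper obtains your third identity directly from \eqref{th1-ceof-p1} and \eqref{th1-ceof-q2} rather than by eliminating $p_2$ — so there is nothing to correct.
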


\begin{proof}
Since $f\in\mathcal{S}^{a,b,c}_{\Sigma}(\gamma,\lambda,\beta)$, from the definition relations \eqref{Defi-1-e1} and \eqref{Defi-1-e2} it follows that
\begin{eqnarray}\label{bi-th1-pr-e1}
&1+\dfrac{1}{\gamma}\left[\dfrac{z\left(\mathcal{I}_{a,b,c}f(z)\right)'}
{(1-\lambda)z+\lambda\mathcal{I}_{a,b,c}f(z)}-1\right]=\nonumber\\
&1+\dfrac{2-\lambda}{\gamma}\varphi_2 a_2 z+\left[\dfrac{\lambda^2-2\lambda}{\gamma}\varphi_2^2 a_2^2+\dfrac{3-\lambda}{\gamma}\varphi_3 a_3\right]z^2+\dots=:p(z)
\end{eqnarray}
and
\begin{eqnarray}\label{bi-th1-pr-e2}
&1+\dfrac{1}{\gamma}\left[\dfrac{w\left(\mathcal{I}_{a,b,c}g(w)\right)'}
{(1-\lambda)w+\lambda\mathcal{I}_{a,b,c}g(w)}-1\right]=\nonumber\\
&1-\dfrac{2-\lambda}{\gamma}\varphi_2 a_2 w+\left[\dfrac{\lambda^2-2\lambda}{\gamma}\varphi_2^2 a_2^2+\dfrac{3-\lambda}{\gamma}\varphi_3\left(2a_2^2-a_3\right)\right]w^2+\dots=:q(w),
\end{eqnarray}
where $p,q\in\mathcal{P}_m(\beta)$, and are of the form \eqref{phi-uz} and \eqref{phi-vw}, respectively.

Now, equating the coefficients in \eqref{bi-th1-pr-e1} and \eqref{bi-th1-pr-e2}, we get
\begin{eqnarray}
&&\frac{2-\lambda}{\gamma}\varphi_2 a_2=p_{1},\label{th1-ceof-p1}\\
&&\frac{\lambda^2-2\lambda}{\gamma}\varphi_2^2 a_2^2+
\frac{3-\lambda}{\gamma}\varphi_3 a_3=p_{2},\label{th1-ceof-p2}\\
&&-\frac{2-\lambda}{\gamma}\varphi_2 a_2=q_{1},\label{th1-ceof-q1}
\end{eqnarray}
and
\begin{equation}\label{th1-ceof-q2}
\frac{\lambda^2-2\lambda}{\gamma}\varphi_2^2 a_2^2+
\frac{3-\lambda}{\gamma}\varphi_3\left(2a_2^2-a_3\right)=q_{2}.
\end{equation}
From \eqref{th1-ceof-p1} and \eqref{th1-ceof-q1}, we find that
\begin{equation}\label{th1-pr-p1=q1-a2}
a_2=\frac{\gamma p_1}{(2-\lambda)\varphi_2}=\frac{-\gamma q_1}{(2-\lambda)\varphi_2},
\end{equation}
which implies
\begin{equation}\label{th1-pr-p1=q1}
|a_2|\leq \frac{|\gamma| m(1-\beta)}{(2-\lambda)\varphi_2}.
\end{equation}
Adding \eqref{th1-ceof-p2} and \eqref{th1-ceof-q2}, by using \eqref{th1-pr-p1=q1-a2} we obtain
\[
\left[2\left(\lambda^2-2\lambda\right)\varphi_2^2+2(3-\lambda)\varphi_3\right]a_2^2=\gamma(p_2+q_2).
\]
Now, by using \eqref{mod-p-q} and \eqref{c7e2.14}, we get
\[
|a_2|^2=\frac{m|\gamma|(1-\beta)}{\left|\left(\lambda^2-2\lambda\right)\varphi_2^2+(3-\lambda)\varphi_3\right|},
\]
hence
\[
|a_2|\leq\sqrt{\frac{m|\gamma|(1-\beta)}{\left|(\lambda^2-2\lambda)\varphi_2^2+(3-\lambda)\varphi_3\right|}},
\]
which gives the bound on $|a_2|$ as asserted in \eqref{bi-th1-b-a2}.

Next, in order to find the upper-bound for $|a_3|$, by subtracting \eqref{th1-ceof-q2} from \eqref{th1-ceof-p2}, we get
\begin{equation}\label{th1-a3-cal-e1}
2(3-\lambda)\varphi_3 a_3=\gamma(p_2-q_2)+2(3-\lambda)\varphi_3a_2^2.
\end{equation}
It follows from \eqref{mod-p-q}, \eqref{th1-pr-p1=q1} and \eqref{th1-a3-cal-e1}, that
\[
|a_3|\leq\frac{m|\gamma|(1-\beta)}{(3-\lambda)|\varphi_3|}+\frac{m|\gamma|(1-\beta)}
{\left|(\lambda^2-2\lambda)\varphi_2^2+(3-\lambda)\varphi_3\right|}.
\]
From \eqref{th1-ceof-p1} and \eqref{th1-ceof-p2} we have
\[
a_3=\frac{1}{(3-\lambda)\varphi_3}
\left(\gamma p_2-\frac{\gamma^2(\lambda^2-2\lambda)p^2_1}{(2-\lambda)^2\varphi_2^2}\right),\]
hence
\[
|a_3|\leq\frac{m|\gamma|(1-\beta)}{(3-\lambda)\varphi_3}
\left(1+\frac{m|\gamma(\lambda^2-2\lambda)|(1-\beta)}{(2-\lambda)^2\varphi_2^2}\right).
\]
Further, from \eqref{th1-ceof-p1} and \eqref{th1-ceof-q2} we deduce that
\[
|a_3|\leq\frac{m|\gamma|(1-\beta)}{(3-\lambda)\varphi_3}\left(1+m|\gamma|(1-\beta) \frac{|(\lambda^2-2\lambda)\varphi_2^2+2(3-\lambda)\varphi_3|}{(2-\lambda)^2\varphi_2^2}\right),
\]
and thus we obtain the conclusion \eqref{bi-th1-b-a3} of our theorem.
\end{proof}

For the special cases $\lambda=1$ and $\lambda=0$, the Theorem \ref{Bi-th1} reduces to the following corollaries, respectively:

\begin{corollary}\label{sss1}
If the function $f$ given by \eqref{Int-e1} belongs to the class $\mathcal{S}^{a,b,c}_{\Sigma}(\gamma,\beta)$, then
\[
\left|a_2\right|\leq\min\left\{\sqrt{\frac{m|\gamma|(1-\beta)}{\left|2\varphi_3-\varphi_2^2\right|}};
\frac{m|\gamma|(1-\beta)}{\varphi_2}\right\}
\]
and
\begin{eqnarray*}
\left|a_3\right|\leq\min\left\{\dfrac{m|\gamma|(1-\beta)}{\left|2\varphi_3-\varphi_2^2\right|}+
\dfrac{m|\gamma|(1-\beta)}{2\varphi_3};
\dfrac{m|\gamma|(1-\beta)}{2\varphi_3}\left(1+\dfrac{m|\gamma|(1-\beta)}{\varphi_2^2}\right);\right.\\
\left.\dfrac{m|\gamma|(1-\beta)}{2\varphi_3}
\left(1+\dfrac{m|\gamma|(1-\beta)\left|4\varphi_3-\varphi_2^2\right|}{\varphi_2^2}\right)\right\},
\end{eqnarray*}
where $\varphi_2$ and $\varphi_3$ are given by \eqref{varphi-n}.
\end{corollary}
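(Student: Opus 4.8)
The plan is to obtain this corollary directly from Theorem \ref{Bi-th1} by specializing the parameter $\lambda$, rather than re-running the coefficient comparison from scratch. First I would observe that the class $\mathcal{S}^{a,b,c}_{\Sigma}(\gamma,\beta)$ introduced in Example \ref{ex1}(i) is exactly the class $\mathcal{S}^{a,b,c}_{\Sigma}(\gamma,1,\beta)$ of Definition \ref{Defi-1}. Indeed, putting $\lambda=1$ in the subordinate conditions \eqref{Defi-1-e1} and \eqref{Defi-1-e2} collapses the denominators $(1-\lambda)z+\lambda\mathcal{I}_{a,b,c}f(z)$ and $(1-\lambda)w+\lambda\mathcal{I}_{a,b,c}g(w)$ to $\mathcal{I}_{a,b,c}f(z)$ and $\mathcal{I}_{a,b,c}g(w)$, respectively, which reproduces verbatim the two membership conditions of Example \ref{ex1}(i). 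Hence every $f$ in the corollary's class satisfies the hypotheses of Theorem \ref{Bi-th1} with $\lambda=1$.

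With this identification in hand, I would substitute $\lambda=1$ into the two bounds \eqref{bi-th1-b-a2} and \eqref{bi-th1-b-a3}. The whole argument rests on recording the elementary simplifications of the $\lambda$-dependent coefficients:
\[
\lambda^2-2\lambda=-1,\quad 3-\lambda=2,\quad 2-\lambda=1,\quad 2\lambda-\lambda^2=1,\quad (2-\lambda)^2=1.
\]
Feeding these into \eqref{bi-th1-b-a2}, the quantity $\left|(\lambda^2-2\lambda)\varphi_2^2+(3-\lambda)\varphi_3\right|$ becomes $\left|2\varphi_3-\varphi_2^2\right|$ and the factor $(2-\lambda)\varphi_2$ becomes $\varphi_2$, which yields the stated estimate for $|a_2|$.

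For $|a_3|$ I would treat the three entries of the minimum in \eqref{bi-th1-b-a3} one at a time, applying the same substitutions. The first entry simplifies through $(3-\lambda)\varphi_3\mapsto 2\varphi_3$ and $\left|(\lambda^2-2\lambda)\varphi_2^2+(3-\lambda)\varphi_3\right|\mapsto\left|2\varphi_3-\varphi_2^2\right|$; the second uses $(3-\lambda)\varphi_3\mapsto 2\varphi_3$ together with $(2\lambda-\lambda^2)/(2-\lambda)^2\mapsto 1$; and the third uses $\left|(\lambda^2-2\lambda)\varphi_2^2+2(3-\lambda)\varphi_3\right|\mapsto\left|4\varphi_3-\varphi_2^2\right|$ with $(2-\lambda)^2\mapsto 1$. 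Collecting the three resulting expressions reproduces the asserted bound for $|a_3|$.

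Since the argument is a mechanical specialization of an already-proved theorem, there is essentially no substantive obstacle. The only point requiring genuine care is the sign and numerical bookkeeping in the third $|a_3|$ entry, where one must verify that $(\lambda^2-2\lambda)\varphi_2^2+2(3-\lambda)\varphi_3=-\varphi_2^2+4\varphi_3$ is correctly read as $4\varphi_3-\varphi_2^2$ inside the modulus. I would also confirm that no extra hypothesis (such as positivity of $2\varphi_3-\varphi_2^2$) is needed; this is immediate, since every affected quantity is enclosed in an absolute value, so the bounds hold regardless of the sign of $2\varphi_3-\varphi_2^2$.
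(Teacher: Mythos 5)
Your proposal is correct and coincides with the paper's own treatment: the paper states Corollary \ref{sss1} precisely as the specialization $\lambda=1$ of Theorem \ref{Bi-th1}, with the class identification coming from Example \ref{ex1}(i), exactly as you argue. Your arithmetic substitutions $\lambda^2-2\lambda=-1$, $3-\lambda=2$, $2-\lambda=1$ and the resulting simplifications of \eqref{bi-th1-b-a2} and all three entries of \eqref{bi-th1-b-a3} are all accurate.
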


\begin{corollary}\label{Bi-cor1}
If the function $f$ given by \eqref{Int-e1} belongs to the class $\mathcal{G}^{a,b,c}_{\Sigma}(\gamma,\beta)$, then
\[
|a_2|\leq\min\left\{\sqrt{\frac{m|\gamma|(1-\beta)}{3\varphi_3}};\frac{m|\gamma|(1-\beta)}{2\varphi_2}\right\}
\]
and
\[
|a_3|\leq\dfrac{m|\gamma|(1-\beta)}{3\varphi_3},
\]
where $\varphi_2$ and $\varphi_3$ are given by \eqref{varphi-n}.
\end{corollary}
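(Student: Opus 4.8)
The plan is to derive this corollary directly as the $\lambda=0$ specialization of Theorem~\ref{Bi-th1}, since the class appearing here is precisely the instance $\mathcal{S}^{a,b,c}_{\Sigma}(\gamma,0,\beta)$ obtained by setting $\lambda=0$ in Definition~\ref{Defi-1}. The first thing I would do is record the elementary simplifications that occur at $\lambda=0$: namely $(2-\lambda)\varphi_2=2\varphi_2$, $(3-\lambda)\varphi_3=3\varphi_3$, and, decisively, $\lambda^2-2\lambda=0$, so that the composite expression $(\lambda^2-2\lambda)\varphi_2^2+(3-\lambda)\varphi_3$ collapses to $3\varphi_3$. These substitutions are the entire engine of the argument; no fresh estimates on the coefficients $p_k$, $q_k$ are required, as everything is inherited from the proof of Theorem~\ref{Bi-th1}.

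For the $|a_2|$ estimate, substituting the above into \eqref{bi-th1-b-a2} turns the two arguments of the minimum into $\sqrt{m|\gamma|(1-\beta)/(3\varphi_3)}$ and $m|\gamma|(1-\beta)/(2\varphi_2)$, which is exactly the asserted bound. For the $|a_3|$ estimate I would substitute $\lambda=0$ into each of the three arguments of the minimum in \eqref{bi-th1-b-a3} separately. The vanishing of $\lambda^2-2\lambda$ is again the key: the first argument becomes $2m|\gamma|(1-\beta)/(3\varphi_3)$, the second argument loses its correction term entirely and reduces to the bare $m|\gamma|(1-\beta)/(3\varphi_3)$, while the third argument picks up a strictly positive correction $\frac{3m|\gamma|(1-\beta)\varphi_3}{2\varphi_2^2}$ and is therefore larger still. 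Consequently the minimum of the three is attained by the second argument, and the whole expression collapses to $|a_3|\leq m|\gamma|(1-\beta)/(3\varphi_3)$.

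The only point requiring any care---and it is the closest thing to an obstacle---is verifying that the second argument is genuinely the smallest of the three after the substitution, so that the three-way minimum simplifies to a single clean term. This is immediate once one notes that both the first argument (which is exactly double) and the third argument (which exceeds it by a nonnegative multiple of $\varphi_3/\varphi_2^2$) are at least $m|\gamma|(1-\beta)/(3\varphi_3)$. Thus the reduction is purely computational, and the corollary follows without any new analytic input beyond Theorem~\ref{Bi-th1}.
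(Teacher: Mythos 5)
Your proposal is correct and is exactly the paper's route: the paper obtains Corollary~\ref{Bi-cor1} by setting $\lambda=0$ in Theorem~\ref{Bi-th1}, and your substitutions (in particular $\lambda^2-2\lambda=0$, which makes the second argument of the three-way minimum in \eqref{bi-th1-b-a3} collapse to $m|\gamma|(1-\beta)/(3\varphi_3)$ while the other two are visibly larger) are precisely the computation the paper leaves implicit. Your extra care in verifying that the second argument realizes the minimum is a welcome detail the paper omits, and incidentally the class name $\mathcal{G}^{a,b,c}_{\Sigma}(\gamma,\beta)$ in the corollary is a typo for the class $\mathcal{H}^{a,b,c}_{\Sigma}(\gamma,\beta)$ of Example~\ref{ex3}(i), as you correctly inferred.
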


\section{Coefficient Bounds for the Function Class $\mathcal{K}^{a,b,c}_{\Sigma}(\gamma,\lambda,\beta)$}

\begin{theorem}\label{thgms1}
If the function $f$ given by \eqref{Int-e1} belongs to the class $\mathcal{K}^{a,b,c}_{\Sigma}(\gamma,\lambda,\beta)$, then
\begin{equation}\label{K-a2}
|a_2|\leq\min\left\{\sqrt{\frac{m|\gamma|(1-\beta)}
{\left|4(\lambda^2-2\lambda)\varphi_2^2+3(3-\lambda)\varphi_3\right|}};
\frac{m|\gamma|(1-\beta)}{2(2-\lambda)\varphi_2}\right\}
\end{equation}
and
\begin{eqnarray}\label{K-a3}
|a_3|\leq\min\left\{\frac{m|\gamma|(1-\beta)}{3(3-\lambda)\varphi_3}
\left(1+\frac{m|\gamma|(2\lambda-\lambda^2)(1-\beta)}{(2-\lambda)^2\varphi_2^2}\right);\right.\nonumber\\
\left.\frac{m|\gamma|(1-\beta)}{3(3-\lambda)\varphi_3}+\frac{m|\gamma|(1-\beta)}
{\left|4(\lambda^2-2\lambda)\varphi_2^2+3(3-\lambda)\varphi_3\right|};\nonumber\right.\\
\left.\frac{m|\gamma|(1-\beta)}{3(3-\lambda)\varphi_3}+ \frac{m^2|\gamma|^2(1-\beta)^2}{3(3-\lambda)\varphi_3}\left(1+ \frac{3(3-\lambda)\varphi_3}{2(2-\lambda)^2\varphi_2^2}\right)\right\},
\end{eqnarray}
where $\varphi_2$ and $\varphi_3$ are given by \eqref{varphi-n}.
\end{theorem}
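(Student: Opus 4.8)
The plan is to mirror the proof of Theorem~\ref{Bi-th1} almost verbatim, the only genuinely new ingredient being the power-series expansion of the convex-type quotient appearing in \eqref{def2} and \eqref{def21}. First I would record the expansions of the Hohlov transform and its derivatives. Writing $F(z):=\mathcal{I}_{a,b,c}f(z)=z+\varphi_2 a_2 z^2+\varphi_3 a_3 z^3+\dots$, one has $zF'(z)+z^2F''(z)=z+4\varphi_2 a_2 z^2+9\varphi_3 a_3 z^3+\dots$ (the coefficient of $z^n$ being $n^2\varphi_n a_n$), while the denominator is $(1-\lambda)z+\lambda zF'(z)=z+2\lambda\varphi_2 a_2 z^2+3\lambda\varphi_3 a_3 z^3+\dots$. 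Dividing the two series and retaining the terms up to $z^2$ gives
\begin{align*}
&1+\frac{1}{\gamma}\left[\frac{zF'(z)+z^2F''(z)}{(1-\lambda)z+\lambda zF'(z)}-1\right]\\
&\quad=1+\frac{2(2-\lambda)}{\gamma}\varphi_2 a_2 z+\frac{1}{\gamma}\left[4(\lambda^2-2\lambda)\varphi_2^2a_2^2+3(3-\lambda)\varphi_3 a_3\right]z^2+\dots=:p(z),
\end{align*}
and the analogous expansion for $g$ is obtained through the substitution $a_2\mapsto-a_2$, $a_3\mapsto 2a_2^2-a_3$ dictated by \eqref{g-e}, producing $q(w)$. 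Here $p,q\in\mathcal{P}_m(\beta)$ are of the form \eqref{phi-uz}, \eqref{phi-vw}.

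Next I would equate coefficients to obtain the four relations $p_1=\frac{2(2-\lambda)}{\gamma}\varphi_2 a_2$, $\;q_1=-\frac{2(2-\lambda)}{\gamma}\varphi_2 a_2$, together with the $z^2$- and $w^2$-coefficient identities defining $p_2$ and $q_2$. From $p_1=-q_1$ and Lemma~\ref{lem2.1} (via \eqref{mod-p-q}, \eqref{c7e2.14}) the estimate $|a_2|\leq\frac{m|\gamma|(1-\beta)}{2(2-\lambda)\varphi_2}$ follows at once, while adding the $p_2$- and $q_2$-relations eliminates $a_3$ and yields $2\left[4(\lambda^2-2\lambda)\varphi_2^2+3(3-\lambda)\varphi_3\right]a_2^2=\gamma(p_2+q_2)$, hence the square-root bound in \eqref{K-a2}; taking the minimum of the two gives \eqref{K-a2}.

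Finally, for \eqref{K-a3} I would extract $a_3$ from three different combinations, exactly paralleling Theorem~\ref{Bi-th1}. (i) Solving the $p_1$- and $p_2$-relations for $a_3$ and substituting $a_2^2=\gamma^2p_1^2/[4(2-\lambda)^2\varphi_2^2]$ gives the first candidate. (ii) Subtracting the $q_2$-relation from the $p_2$-relation yields $6(3-\lambda)\varphi_3(a_3-a_2^2)=\gamma(p_2-q_2)$, and inserting the square-root bound for $|a_2|^2$ gives the second candidate. (iii) Solving the $q_2$-relation alone for $a_3$, substituting the $p_1$-expression for $a_2^2$, and estimating the resulting coefficient $\frac{2\lambda-\lambda^2}{(2-\lambda)^2}=\frac{\lambda}{2-\lambda}\leq1$ (valid since $0\leq\lambda\leq1$) produces the third candidate. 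Taking the minimum of the three then gives \eqref{K-a3}.

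The routine but error-prone step—and the only real obstacle—is the long division of the two power series for the convex functional, since the denominator now carries $\lambda z F'(z)$ rather than $\lambda\,\mathcal{I}_{a,b,c}f$; this introduces the extra factors $4$, $9$, $2\lambda$, $3\lambda$ and must be tracked carefully to reproduce the coefficients $2(2-\lambda)\varphi_2$ and $4(\lambda^2-2\lambda)\varphi_2^2+3(3-\lambda)\varphi_3$ exactly. Everything downstream is a faithful repetition of the modulus estimates carried out in Theorem~\ref{Bi-th1}.
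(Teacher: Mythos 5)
Your proposal reproduces the paper's proof essentially verbatim: the same expansions \eqref{GMS1}--\eqref{GMS2} with coefficients $2(2-\lambda)\varphi_2$ and $4(\lambda^2-2\lambda)\varphi_2^2+3(3-\lambda)\varphi_3$, the same four coefficient relations, $a_2$ obtained from the $p_1$-relation and $a_2^2$ from adding the $p_2$- and $q_2$-relations for \eqref{K-a2}, and the same three recombinations (the $p_1,p_2$-relations; the difference of the $p_2,q_2$-relations combined with the square-root bound; the $q_2$-relation alone with $a_2^2$ eliminated via $p_1$) for the three candidates in \eqref{K-a3}, all estimated through Lemma \ref{lem2.1}. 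Your observation that $\frac{2\lambda-\lambda^2}{(2-\lambda)^2}=\frac{\lambda}{2-\lambda}\leq1$ is precisely the estimate the paper uses implicitly in its closing step, so the argument is correct and follows the same route.
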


\begin{proof}
For $f\in\mathcal{K}^{a,b,c}_{\Sigma}(\gamma,\lambda,\beta)$, from the definition relations \eqref{def2} and \eqref{def21} it follows that
\begin{eqnarray}\label{GMS1}
&1+\dfrac{1}{\gamma}\left[\dfrac{z\left(\mathcal{I}_{a,b,c}f(z)\right)'+
z^2\left(\mathcal{I}_{a,b,c}f(z)\right)''}{(1-\lambda)z+\lambda z\left(\mathcal{I}_{a,b,c}f(z)\right)'}-1\right]=\nonumber\\
&1+\dfrac{2(2-\lambda)}{\gamma}\varphi_2 a_2 z+\left[\dfrac{4(\lambda^2-2\lambda)}{\gamma}\varphi_2^2 a_2^2+\dfrac{3(3-\lambda)}{\gamma}\varphi_3 a_3\right]z^2+\dots=:p(z)
\end{eqnarray}
and
\begin{eqnarray}\label{GMS2}
&1+\dfrac{1}{\gamma}\left[\dfrac{w\left(\mathcal{I}_{a,b,c}g(w)\right)'+
w^2\left(\mathcal{I}_{a,b,c}g(w)\right)''}{(1-\lambda)w+\lambda z\left(\mathcal{I}_{a,b,c}g(w)\right)'}-1\right]\nonumber=\\
&1-\dfrac{2(2-\lambda)}{\gamma}\varphi_2 a_2 w+\left[\dfrac{4(\lambda^2-2\lambda)}{\gamma}\varphi_2^2 a_2^2+\dfrac{3(3-\lambda)}{\gamma}\varphi_3(2a_2^2-a_3)\right]w^2+\dots=:q(w),
\end{eqnarray}
where $p,q\in\mathcal{P}_m(\beta)$, and are of the form \eqref{phi-uz} and \eqref{phi-vw}, respectively.

Now, equating the coefficients in \eqref{GMS1} and \eqref{GMS2}, we get
\begin{eqnarray}
&&\frac{2}{\gamma}(2-\lambda)\varphi_2 a_{2}=p_{1},\label{e3.5}\\
&&\frac{1}{\gamma}\left[4(\lambda^2-2\lambda)\varphi^2_2a^2_2+3(3-\lambda)\varphi_3a_{3}\right]=
p_{2},\label{e3.6}\\
&&-\frac{2}{\gamma}(2-\lambda)\varphi_2 a_{2}=q_{1},\nonumber
\end{eqnarray}
and
\begin{equation}\label{e3.8}
\frac{1}{\gamma}\left[4(\lambda^2-2\lambda)\varphi^2_2 a^2_2+3(3-\lambda)(2a_2^2-a_3)\varphi_3\right]=q_{2}.
\end{equation}
From \eqref{e3.5} we get
\begin{equation}\label{e3.9}
a_2=\frac{p_1\gamma}{2(2-\lambda)\varphi_2},
\end{equation}
further, by adding \eqref{e3.6} and \eqref{e3.8}, and using \eqref{e3.9} we get
\begin{equation}\label{e3.10}
a_2^2=\frac{(p_2+q_2)\gamma}{8(\lambda^2-2\lambda)\varphi_2^2+6(3-\lambda)\varphi_3}.
\end{equation}
Now, from \eqref{e3.5} and \eqref{e3.10}, according to Lemma \ref{lem2.1} we easily deduce the inequality \eqref{K-a2}.

Next, in order to find the upper-bound for $|a_3|$, from \eqref{e3.6}, by using \eqref{e3.9} we have
\[
a_3=\frac{p_2\gamma}{3(3-\lambda)\varphi_3}-\frac{(\lambda^2-2\lambda)p_1^2\gamma^2}{3(2-\lambda)^2(3-\lambda)\varphi_3}.
\]
Subtracting \eqref{e3.8} and \eqref{e3.6} we obtain
\[
-6(3-\lambda)\varphi_3 a_3+6(3-\lambda)a_2^2\varphi_3=(p_2-q_2)\gamma,
\]
and using \eqref{e3.10} we deduce
\[
a_3=\frac{(p_2+q_2)\gamma}{8(\lambda^2-2\lambda)\varphi_2^2+6(3-\lambda)\varphi_3}-
\frac{(p_2-q_2)\gamma}{6(3-\lambda)\varphi_3}.
\]
Finally, from \eqref{e3.8} and using \eqref{e3.9} we get
\[
a_3=\frac{1}{3(3-\lambda)}\left(1+\frac{3(3-\lambda)\varphi_3}{2(2-\lambda)^2\varphi^2_2}\right)p_1^2\gamma^2-
\frac{q_2\gamma}{3(3-\lambda)\varphi_3}.
\]

Proceeding on lines similar to the proof of Theorem \ref{Bi-th1} and applying the Lemma \ref{lem2.1}, we get the desired estimate given in \eqref{K-a3}.
\end{proof}

Taking $\lambda=1$ and $\lambda=0$ in Theorem \ref{Bi-th1}, we obtain the following corollaries, respectively:

\begin{corollary}\label{cor1thgms1}
If the function $f$ given by \eqref{Int-e1} belongs to the class $\mathcal{K}^{a,b,c}_{\Sigma}(\gamma,\beta)$, then
\[
|a_2|\leq\min\left\{\sqrt{\frac{m|\gamma|(1-\beta)}{\left|6\varphi_3-4\varphi_2^2\right|}};
\frac{m|\gamma|(1-\beta)}{2\varphi_2}\right\}
\]
and
\begin{eqnarray*}
|a_3|\leq\min\left\{\frac{m|\gamma|(1-\beta)}{6\varphi_3}\left( 1+\frac{m|\gamma|(1-\beta)}{\varphi_2^2}\right);
\frac{m|\gamma|(1-\beta)}{6\varphi_3}+\frac{m|\gamma|(1-\beta)}{\left|6\varphi_3-4\varphi_2^2\right|};\right.\\
\left.\frac{m|\gamma|(1-\beta)}{6\varphi_3}+\frac{m^2|\gamma|^2(1-\beta)^2}{6\varphi_3}
\left(1+\frac{6\varphi_3}{2\varphi_2^2}\right)\right\},
\end{eqnarray*}
where $\varphi_2$ and $\varphi_3$ are given by \eqref{varphi-n}.
\end{corollary}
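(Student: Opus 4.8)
The plan is to obtain this corollary as the direct specialization $\lambda = 1$ of Theorem \ref{thgms1}, so that no new coefficient computation is required. The first task is to confirm that the class $\mathcal{K}^{a,b,c}_{\Sigma}(\gamma,\beta)$ appearing in the statement coincides with $\mathcal{K}^{a,b,c}_{\Sigma}(\gamma,1,\beta)$. To see this I would set $\lambda = 1$ in the defining subordinations \eqref{def2} and \eqref{def21}: the denominator $(1-\lambda)z + \lambda z\left(\mathcal{I}_{a,b,c}f(z)\right)'$ collapses to $z\left(\mathcal{I}_{a,b,c}f(z)\right)'$, and after cancelling this factor against the first term of the numerator the bracketed quantity reduces to $z\left(\mathcal{I}_{a,b,c}f(z)\right)''/\left(\mathcal{I}_{a,b,c}f(z)\right)'$. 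This recovers precisely the two conditions listed in Example \ref{ex1}(ii), so the membership criterion is identical and Theorem \ref{thgms1} applies verbatim.

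With that identification in hand, the second step is a term-by-term substitution into the bounds \eqref{K-a2} and \eqref{K-a3}. I would first record the values of the recurring $\lambda$-dependent factors at $\lambda = 1$, namely $4(\lambda^2 - 2\lambda) = -4$, $3(3-\lambda) = 6$, $2(2-\lambda) = 2$, $(2-\lambda)^2 = 1$, and $(2\lambda - \lambda^2) = 1$. Feeding these into \eqref{K-a2} turns $\bigl|4(\lambda^2-2\lambda)\varphi_2^2 + 3(3-\lambda)\varphi_3\bigr|$ into $|6\varphi_3 - 4\varphi_2^2|$ and $2(2-\lambda)\varphi_2$ into $2\varphi_2$, which gives the asserted bound on $|a_2|$. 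The same substitutions, applied to each of the three entries of the minimum in \eqref{K-a3}, send the factor $3(3-\lambda)\varphi_3$ to $6\varphi_3$, the denominator $(2-\lambda)^2\varphi_2^2$ to $\varphi_2^2$, the modulus $\bigl|4(\lambda^2-2\lambda)\varphi_2^2 + 3(3-\lambda)\varphi_3\bigr|$ to $|6\varphi_3 - 4\varphi_2^2|$, and the ratio $3(3-\lambda)\varphi_3/\bigl[2(2-\lambda)^2\varphi_2^2\bigr]$ to $6\varphi_3/(2\varphi_2^2)$, reproducing exactly the three expressions in the stated minimum for $|a_3|$.

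There is no genuine obstacle here, since the corollary is a transcription of an already-proved theorem; the only care required is bookkeeping of the sign in the factor $4(\lambda^2-2\lambda) = -4$ and consistency in whether one writes $\varphi_3$ or $|\varphi_3|$ in the denominators, matching the convention used in \eqref{K-a2} and \eqref{K-a3}. I would also flag, as an aside, that the sentence preceding the corollary cites Theorem \ref{Bi-th1}, whereas the displayed bounds are the $\lambda=1$ specialization of Theorem \ref{thgms1}; this is a cross-reference slip that should read Theorem \ref{thgms1}.
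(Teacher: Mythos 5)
Your proposal is correct and takes exactly the paper's route: the corollary is presented (without separate proof) as the $\lambda=1$ specialization of Theorem \ref{thgms1}, and your substitutions $4(\lambda^2-2\lambda)=-4$, $3(3-\lambda)=6$, $2(2-\lambda)=2$, $(2-\lambda)^2=1$, $2\lambda-\lambda^2=1$ reproduce each displayed bound verbatim, with the class identification via Example \ref{ex1}(ii) also matching the paper's setup. You are likewise right about the cross-reference slip: the sentence preceding the corollary cites Theorem \ref{Bi-th1} where Theorem \ref{thgms1} is clearly intended.
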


\begin{corollary}\label{cor2thgms1}
If the function $f$ given by \eqref{Int-e1} belongs to the class $\mathcal{Q}^{a,b,c}_{\Sigma}(\gamma,\beta)$, then
\[
|a_2|\leq\min\left\{\sqrt{\frac{m|\gamma|(1-\beta)}{9\varphi_3}};\frac{m|\gamma|(1-\beta)}{4\varphi_2}\right\}
\]
and
\[
|a_3|\leq\dfrac{m|\gamma|(1-\beta)}{9\varphi_3},
\]
where $\varphi_2$ and $\varphi_3$ are given by \eqref{varphi-n}.
\end{corollary}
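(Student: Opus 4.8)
The plan is to derive this corollary as the $\lambda=0$ specialization of Theorem \ref{thgms1}; no new coefficient computation is required, so the task reduces to identifying the class correctly and then simplifying the two displayed bounds.

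First I would confirm that $\mathcal{Q}^{a,b,c}_{\Sigma}(\gamma,\beta)$ is exactly the class $\mathcal{K}^{a,b,c}_{\Sigma}(\gamma,0,\beta)$. Setting $\lambda=0$ in Definition \ref{defin2} makes the denominators in \eqref{def2} and \eqref{def21} collapse to $z$ and $w$, respectively, so the defining conditions become
\[
1+\frac{1}{\gamma}\left[\left(\mathcal{I}_{a,b,c}f(z)\right)'+z\left(\mathcal{I}_{a,b,c}f(z)\right)''-1\right]\in\mathcal{P}_m(\beta)
\]
together with the corresponding condition for $g$. These are precisely the requirements listed in Example \ref{ex3}(ii), so $\mathcal{Q}^{a,b,c}_{\Sigma}(\gamma,\beta)=\mathcal{K}^{a,b,c}_{\Sigma}(\gamma,0,\beta)$, and Theorem \ref{thgms1} applies with $\lambda=0$.

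Next I would substitute $\lambda=0$ into \eqref{K-a2} and \eqref{K-a3}, using $\lambda^2-2\lambda=0$, $2-\lambda=2$ and $3-\lambda=3$. The bound \eqref{K-a2} then reduces at once to $\min\bigl\{\sqrt{m|\gamma|(1-\beta)/(9\varphi_3)},\,m|\gamma|(1-\beta)/(4\varphi_2)\bigr\}$, as asserted. In \eqref{K-a3} the three candidate expressions become $m|\gamma|(1-\beta)/(9\varphi_3)$ (the factor $2\lambda-\lambda^2$ vanishing), $2m|\gamma|(1-\beta)/(9\varphi_3)$, and $m|\gamma|(1-\beta)/(9\varphi_3)$ augmented by a strictly positive term.

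The only point needing an explicit remark is that the minimum of these three quantities is attained by the first: the second equals twice the first, while the third exceeds the first by a nonnegative amount (recall $\varphi_2,\varphi_3>0$, $|\gamma|>0$ and $1-\beta>0$), so the $\min$ collapses to $m|\gamma|(1-\beta)/(9\varphi_3)$, which is exactly the stated bound for $|a_3|$. I expect no genuine obstacle here, since the whole argument is a specialization of the already-proved Theorem \ref{thgms1} combined with this elementary comparison of the three terms.
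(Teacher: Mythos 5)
Your proposal is correct and follows exactly the paper's route: the corollary is obtained by setting $\lambda=0$ in Theorem \ref{thgms1} (note the paper's introductory sentence mistakenly cites Theorem \ref{Bi-th1}, but your identification of Theorem \ref{thgms1} as the relevant result is the right one), with the class identification via Example \ref{ex3}(ii). Your explicit verification that the first candidate in \eqref{K-a3} realizes the minimum --- the second being twice the first and the third exceeding it by a strictly positive term --- is the only simplification the paper leaves implicit, and you carry it out correctly.
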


\begin{remark}\label{remf1}
For $a=c$ and $b=1$, we have $\varphi_n=1$ for all $n\geq1$, and taking $\gamma=1$ and $m=2$ in Corollary \ref{sss1} and Corollary \ref{Bi-cor1} we obtain more accurate results corresponding to the results obtained in \cite{HMS-GMS-NM,HMS-AKM-PG}.
\end{remark}

\begin{remark}\label{remf2}
(i) If $a=1$, $b=1+\delta$, $c=2+\delta$, with $\operatorname{Re}\delta>-1$, then the operator $I_{a,b,c}$ turns into well-known {\em Bernardi operator}, that is
\[
B_f(z):=\mathcal{I}_{a,b,c}f(z)=\frac{1+\delta}{z^\delta}\int_0^zt^{\delta-1}f(t)\operatorname{d}t.
\]

(ii) Moreover, the operators $\mathcal{I}_{1,1,2}$ and $\mathcal{I}_{1,2,3}$ are the well-known {\em Alexander
and Libera operators}, respectively.

(iii) Further, if we take $b=1$ in \eqref{Hohlov}, then $\mathcal{I}_{a,1,c}$ immediately yields the {\em Carlson-Shaffer operator}, that is $L(a,c):=\mathcal{I}_{a,1,c}$.

Remark that, various other interesting corollaries and consequences of our main results, which are asserted by Theorem \ref{Bi-th1} and Theorem \ref{thgms1} above, can be derived similarly. The details involved may be left as exercises for the interested reader.
\end{remark}
\section*{\textbf{Conflict of Interests}}

The authors declare that there is no conflict of interests regarding the
publication of this paper.


\end{document}